\author{Oleg Viro}
\dedicatory{Stony Brook University, NY, USA;\break
PDMI, St.\ Petersburg, Russia}
\address{Department of Mathematics, Stony Brook University,
Stony Brook NY, 11794-3651, USA
}
\email{oleg.viro@gmail.com}
\begin{document}

\begin{abstract} 
An idea to present a classical Lie group of positive dimension 
by generators and relations sounds dubious, but happens to be fruitful. 
The isometry groups of classical geometries admit elegant and useful 
presentations by generators and relations. They are closely related 
to geometry and allow to make fast and efficient calculations in the 
groups. In this paper simple presentations of  
the isometry groups of Euclidean plane, 2-sphere, the real projective 
plane and groups $SO(3)$ and $O(n)$ are introduced. 
\end{abstract}

\maketitle

\section{Isometries of the Euclidean plane}\label{s1}

\subsection{Well-known facts}\label{s1.1} See, e.g., \cite{Berger}.

{\bfit Classification. } Any 
isometry of the Euclidean plane is either the identity $\id$, 
or a reflection in a line, or a translation, 
or a rotation about a point, 
or a glide reflection (composition of a reflection in a line and a
translation along the same line).

{\bfit Generators. }
Any isometry is a composition of at most three reflections in lines.

{\bfit Compositions of refections. }
The composition $R_m\circ R_l$ of reflections $R_l$ and $R_m$ in 
lines $l$ and $m$, respectively, is \begin{itemize}
\item the identity if $l=m$;
\item a translation if $l\parallel m$;
\item a rotation if $l$ is transverse to $m$.
\end{itemize}

If $l\parallel m$, then the translation $R_m\circ R_l$ moves any point
by a distance twice greater than the distance between the lines in
the direction perpendicular to these lines.  

If $l$ is transverse to $m$, then the composition $R_m\circ R_l$ is a
rotation about the intersection point $m\cap l$ by the angle twice greater
than the angle between the lines. See figure \ref{f1}.

\begin{figure}[htb]
\centerline{\input{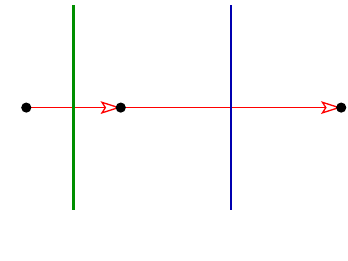_t} \hspace{3cm} \input{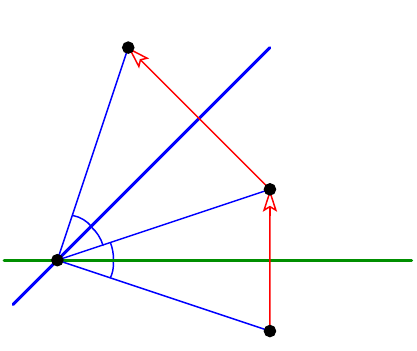_t}}
\caption{Compositions of two reflections}
\label{f1}
\end{figure}

Obviously, in this way any translation and any rotation can be presented 
as a composition of two reflections. 

\subsection{Relations among reflections}\label{s1.2}
Presentations of a translation or a rotation as a composition of two
reflections are not unique. 

Any pair of
lines perpendicular to the direction of a translation and having the same
distance from each other gives a presentation of the same translation.

Any pair of lines meeting at the center of the
rotation and forming the same angle gives a presentation of the same
rotation.

This non-uniqueness of presentations can be formulated as relations among
reflections in lines. Each such relation involves four reflections and has
form 
\begin{equation}\label{eq1}R_m\circ R_l=R_{m'}\circ R_{l'}.\end{equation}
The lines $l$, $m$, $l'$ and $m'$ belong to the same pencil: either all
four lines are parallel, or all four pass through the same point.

The pencils of lines have natural metrics: in a pencil of parallel lines
this is the usual distance between parallel lines; in a pencil of lines
having a common point this is the usual angle between lines. In either case, for 
the lines  $l$, $m$, $l'$ and $m'$ involved in \eqref{eq1}  
 the corresponding distances are related: 
\begin{equation}\label{eq1'}
\dist(l,m)=\dist(l',m') \ \text{ and } \ \dist(l,l')=\dist(m,m').
\end{equation}
\begin{Th}\label{distRel}
Relations \eqref{eq1'} follow from \eqref{eq1}. 
\end{Th}
\begin{proof}
Indeed, 
$$\dist(l,R_m\circ R_l(l))=2\dist(l,m) \text{ and } 
\dist(l',R_{m'}\circ R_{l'}(l'))=2\dist(l',m'), $$ 
therefore if $R_m\circ R_l=R_{m'}\circ R_{l'}$, then $R_m\circ R_l$ should
move any line of the pencil by the same distance as $R_{m'}\circ R_{l'}$.
Thus, $\dist(l,m)=\dist(l',m')$.

Further, by multiplying \eqref{eq1} by $R_{m'}$ from the left and by $R_{l}$
from the right, we obtain $R_{m'}\circ R_{m}=R_{l'}\circ R_{l}$, from which the
second relation of \eqref{eq1'} is deduced exactly as the first is deduced from
\eqref{eq1}.
\end{proof}

Any three lines $l$, $m$, $l'$ that belong to a pencil, can be supplemented by a
unique line $m'$ belonging to the same pencil such that the relations \eqref{eq1'} 
are satisfied. (Of course, here the type of distance is determined by the type 
of the pencil.) Belonging of the lines to the same pencil and relations
\eqref{eq1'} imply \eqref{eq1}. 

Certainly, relations \eqref{eq1} are well-known. For example, in the
group-theoretic approach to foundations of the classical geometries (see,
e.g., the monograph \cite{Bachmann} by Bachmann) lines were identified with 
reflections in them, belonging 
of three lines to a pencil of lines was defined as the fact that the
composition of the corresponding reflections is a reflection. So, the
relations \eqref{eq1} were turned into a definition of concurrency of three
lines, that is belonging three lines to a pencil. 
We will call \eqref{eq1}  {\sfit pencil relations\/}.

If $m=l$, then  \eqref{eq1'} implies $m'=l'$ and then \eqref{eq1} 
turns into $R_l^2=R_{l'}^2$. The latter 
follows from the fact that reflections are involutions:
\begin{equation}\label{eq2}
R_l^2=\id.
\end{equation}
We will call \eqref{eq2} {\sfit involution relations.\/}

\subsection{Completeness of relations}\label{s1.3}
Although the relations \eqref{eq1} and \eqref{eq2} are well-known,
to the best of my knowledge, the following theorem has not
appeared in the literature.

\begin{Th}\label{mainThE(2)}
Any relation among reflections in the group of isometries of Euclidean
plane is a corollary of the pencil and involution relations. 
\end{Th}

\begin{lem}\label{lem4refl}
Any composition of four reflections can be converted by pencil and
involution relations into a composition of two reflections.  
\end{lem}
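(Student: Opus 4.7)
The plan is to force a cancellation in the middle of the composition by introducing a common axis. Concretely, I look for a line $e$ that belongs simultaneously to the pencil of the pair $a,b$ and to the pencil of the pair $c,d$. Once such an $e$ is in hand, the pencil relation \eqref{eq1} applied to the first pair rewrites $R_a\circ R_b$ as $R_{a'}\circ R_e$, and applied to the last pair it yields $R_c\circ R_d=R_e\circ R_{d'}$. Substituting into the four-fold composition and applying the involution relation $R_e^2=\id$ collapses the middle, leaving the two-fold product $R_{a'}\circ R_{d'}$.

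First I dispose of the trivial branches $a=b$ or $c=d$, which collapse to two reflections by one involution move. In the remaining generic situation, existence of a common $e$ follows from a short classification of pairs of pencils: two point-pencils with distinct centres $P,Q$ share the line $PQ$; a point-pencil through $P$ and a parallel pencil with direction $\vec u$ share the unique line through $P$ parallel to $\vec u$; two coinciding pencils share all their elements. Thus a common line $e$ fails to exist only when the pencils of $a,b$ and of $c,d$ are two distinct pencils of parallel lines, i.e.\ when $a\parallel b$ and $c\parallel d$ in non-parallel directions.

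The main obstacle, and the only situation demanding an extra move, is this exceptional configuration. The plan here is a preliminary pencil rewrite of the middle pair: since $b$ and $c$ are not parallel they meet at a point $P$, so the pencil relation in the pencil through $P$ replaces $R_b\circ R_c$ by $R_{b'}\circ R_{c'}$ for any chosen line $b'$ through $P$ (with $c'$ then determined and also passing through $P$). Picking $b'$ neither parallel to $a$ nor parallel to $d$ turns both the pencil of $a,b'$ and the pencil of $c',d$ into point-pencils; a direct inspection using $a\ne b$ and $c\ne d$, which guarantees that neither $a$ nor $d$ passes through $P$, shows that the two new centres, lying on $a$ and on $d$ respectively, cannot coincide. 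The generic argument then applies to $R_a\circ R_{b'}\circ R_{c'}\circ R_d$ and completes the reduction.
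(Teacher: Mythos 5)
Your proposal is correct and takes essentially the same route as the paper's own proof: you use pencil relations to slide each of the two pairs within its pencil onto a common line of the two pencils (the line joining the two centres, or the line through a centre in the parallel direction), so that the middle reflections cancel by an involution relation, and the single exceptional configuration --- two distinct parallel pencils --- is handled, exactly as in the paper, by a preliminary pencil move rotating the transverse middle pair about its intersection point. The ``common element of two pencils'' formulation merely repackages the paper's case-by-case construction, so no further comparison is needed.
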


\begin{proof}
Consider a composition  $R_n\circ R_m\circ R_l\circ R_k$ of four reflections.
If any two consecutive lines coincide (i.e., $k=l$, or $l=m$, or $m=n$), 
then, by applying the involution relation, we can eliminate the
corresponding reflection from the composition. So, in the rest of the
proof, we assume that none of consecutive lines coincide.

Assume that $k\cap l\ne\empt$ and $m\cap n\ne\empt$.  
By rotating the pairs of lines $k,l$ and $m,n$ about the intersection
points, obtain pairs of lines $k',l'$ and $m',n'$ such that 
$l'=m'$ is the line connecting the points $k\cap l$ and $m\cap n$.\\
\vspace{5pt}
%, see Figure \ref{f2}. 
%\begin{figure}[h]
\centerline{\input{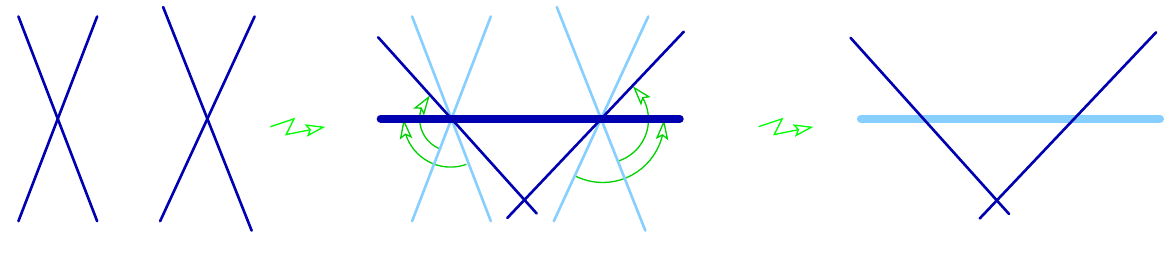_t}}
%\caption{}
%\label{f2}
%\end{figure}
By the pencil relation, $R_n\circ R_m\circ R_l\circ R_k=R_{n'}\circ R_{m'}
\circ R_{l'}\circ R_{k'}$, and, by the involution relation, $R_{m'}\circ
R_{l'}=\id$. Hence,   $R_n\circ R_m\circ R_l\circ R_k=R_{n'}\circ R_{k'}$.

Assume that $k\parallel l$ and $m\cap n\ne\empt$. Then translate $k\cup l$
to $k'\cup l'$ such that $l'$ passes through $m\cap n$, and rotate $m\cup n$
about the point $m\cap n$ to $m'\cup n'$ such that $m'=l'$. \\
\vspace{5pt}
%see Figure \ref{f3}. 
%\begin{figure}[h]
\centerline{\input{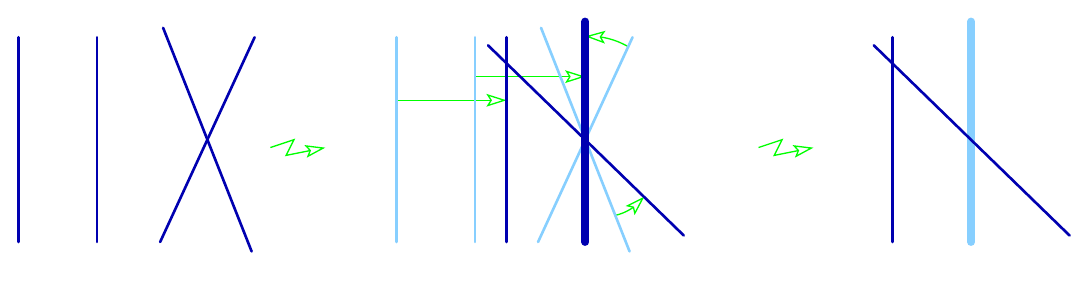_t}}
%\caption{}
%\label{f3}
%\end{figure}
By the pencil
relations,  $R_n\circ R_m\circ R_l\circ R_k=R_{n'}\circ R_{m'}
\circ R_{l'}\circ R_{k'}$, and, by the involution relation, $R_{m'}\circ
R_{l'}=\id$. Hence,   $R_n\circ R_m\circ R_l\circ R_k=R_{n'}\circ R_{k'}$.

If $m\parallel n$ and $k\cap l\ne\empt$, then we do the same, but exchanging
the r\^oles of pairs $k,l$ and $m,n$.

Assume that  $k\parallel l$ and  $m\parallel n$, but $l\cap m\ne\empt$.
Then by rotating the middle pair of lines $l,m$ by right angle we obtain
the situation that was already considered:   $k\cap l'\ne\empt$ and 
$m\cap n'\ne\empt$.\\ 
\vspace{5pt}
\centerline{\input{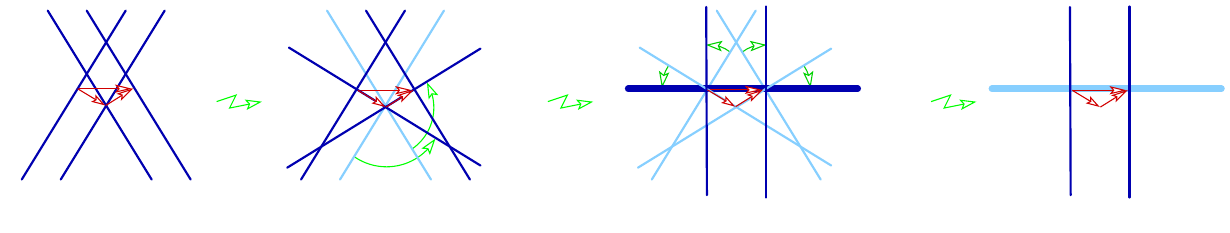_t}}
The figure above provides also a proof of a well-known
fact that composition of translation is a translation by the vector which
is the sum of the vectors corresponding to the original translations.\\ 
%\begin{figure}[h]
%\caption{}
%\label{f3}
%\end{figure}

If all the lines are parallel, then by a translation of $k\cup l$ such that
the image of $l$ would coincide with $m$ and applying pencil and involution
relations as above, we can reduce the number of reflections.
\end{proof}

\begin{proof}[\bfseries Proof of Theorem \ref{mainThE(2)}]
Take any relation $R_1\circ R_2\circ \dots\circ R_n=\id$ among reflections.
By Lemma \ref{lem4refl}, we may reduce by applying relations \eqref{eq1}
and \eqref{eq2} its length $n$ to a number which is less than four. It cannot
be three, because a composition of an odd number of reflections reverses the
orientation, and hence cannot be equal to the identity. 
The only composition of two reflections which is the identity is of the
form \eqref{eq2}.
\end{proof}

Lemma \ref{lem4refl} provides an effective geometric simplification 
algorithm for evaluating a composition of reflections. 
%\clearpage

\section{Isometries of the 2-sphere}\label{s1+}

\subsection{Well-known facts}\label{s1+.1}
The group of isometries  of the 2-sphere $S^2=\{x\in\R^3: |x|=1\}$
coincides
with the orthogonal group $O(3)$: any isometry $S^2\to S^2$ is a
restriction of linear orthogonal transformation of $\R^3$. In this
section we prefer the language of isometries of $S^2$, but everything can
be easily translated to the language of $O(3)$.

{\bfit Classification. } Any isometry of $S^2$ is either the identity
$\id$, or a reflection $R_l$ in a great circle $l$ (i.e., the restriction of a
reflection $\R^3\to\R^3$ in a 2-subspace), or a rotation about a pair of
antipodal points (i.e., the restriction of a rotation $\R^3\to\R^3$ about a
1-subspace), or a glide reflection (the restriction of the composition of a
rotation about a 1-subspace and the reflection in the orthogonal
2-subspace). 

{\bfit Generators. } Any isometry of $S^2$ is a composition of at most
three reflections in great circles.

{\bfit Compositions of reflections. } The composition $R_m\circ R_l$ of
reflections in great circles $l$ and $m$ is 
\begin{itemize} 
\item the identity if $l=m$;  
\item a rotation about $l\cap m$ by the angle twice greater than the angle
between $l$ and $m$ if $l\ne m$.
\end{itemize}

\subsection{Relations among reflections}\label{s1+.2}
Any rotation can be presented as a composition of two reflections. The
presentation is not unique. Any two great circles intersecting at the
antipodal points fixed by the rotation and forming the required angle give
rise to such a presentation. 

This non-uniqueness of presentation can be formulated as relations among
reflections in great circles. Each such relation involves four reflections
and looks like this:
\begin{equation}\label{eq1+}
R_m\circ R_l=R_{m'}\circ R_{l'}
\end{equation}
where $l,m,l',m'$ are great circles such that $l\cap m=l'\cap m'$ and the
angles between between the great circles satisfy two equalities:
$\measuredangle(l,m)=\measuredangle(l',m')$, and
$\measuredangle(l,l')=\measuredangle(m,m')$. 
Relations of this form are called the {\sfit pencil
relations,\/} like  similar relations in the isometry group of the Euclidean 
plane.

A reflection in a great circle is an involution, and we will
refer to the relations   $R_l^2=\id$ as to {\sfit
involution relations.\/}

\begin{Th}\label{mainThO(3)} 
In the isometry group of the 2-sphere, any relation among reflections 
follows from the pencil and involution relations.
\end{Th}

\begin{lem}\label{lem4reflO(3)}
In the isometry group of the 2-sphere, any composition of four reflections 
can be converted by pencil and involution relations into a composition of
two reflections.
\end{lem}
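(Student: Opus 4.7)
My plan is to mirror the proof of Lemma \ref{lem4refl} for the Euclidean plane, but to exploit the simplification that on the 2-sphere every pair of distinct great circles meets, so the ``parallel'' subcases of the Euclidean argument disappear.

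Given a composition $R_n\circ R_m\circ R_l\circ R_k$, I would first use the involution relation to eliminate any two consecutive equal reflections, reducing to the case where consecutive great circles are distinct. In that case, every pair of consecutive circles intersects in a pair of antipodal points. Set $P_1=k\cap l$ and $P_2=m\cap n$. The strategy is to produce a common ``middle'' great circle that both halves can be rewritten around, so that the middle two reflections cancel by involution.

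I would distinguish two cases. If $P_1\ne P_2$ (as antipodal pairs), then picking a point of $P_1$ and a point of $P_2$ gives two points that are not antipodal to each other, so they determine a unique great circle $c$, which meets both $P_1$ and $P_2$. Applying the pencil relation in the pencil through $P_1$, rewrite $R_l\circ R_k=R_c\circ R_{k'}$; applying the pencil relation in the pencil through $P_2$, rewrite $R_n\circ R_m=R_{n'}\circ R_c$. Substituting into the original product and collapsing the middle $R_c\circ R_c=\id$ by involution leaves $R_{n'}\circ R_{k'}$, a composition of two reflections. If $P_1=P_2$, then all four circles already pass through this common pair of antipodal points; I apply a single pencil relation to rotate $(k,l)$ within this pencil so the image of $l$ coincides with $m$, and the middle pair again cancels.

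I do not anticipate a serious obstacle: the Euclidean cases that needed special care (one pair parallel, both pairs parallel, all four parallel) have no spherical analogue since distinct great circles always intersect. The only small fact to check is that a point of $P_1$ and a point of $P_2$ are not antipodal to each other whenever $P_1\ne P_2$, which is immediate from the definition of an antipodal pair, so the auxiliary great circle $c$ is always unambiguously defined.
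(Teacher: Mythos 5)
Your proof is correct and follows essentially the same route as the paper, which simply states that the spherical case repeats the Euclidean proof of Lemma \ref{lem4refl} with obvious simplifications; your argument spells out exactly those simplifications (the parallel subcases vanish because distinct great circles always meet). The auxiliary circle $c$ through one point of $k\cap l$ and one point of $m\cap n$ plays the role of the line $l'=m'$ in the Euclidean argument, and the cancellation by pencil and involution relations is carried out just as there.
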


The proofs of \ref{lem4reflO(3)} and \ref{mainThO(3)} repeat (with obvious 
simplifications) the proofs of \ref{lem4refl} and \ref{mainThE(2)} given
above.

\section{Special orthogonal group $SO(3)$}\label{s3}

Special orthogonal group $SO(3)$ consists of linear maps $\R^3\to\R^3$ 
preserving distances and orientation.  Each such map has eigenvalue 1, i.e., 
it has fixed line and rotates the whole 3-space about the line by some
angle. Therefore it can be represented as a composition of two reflections
in planes. (Notice that reflections in planes do not belong to $SO(3)$,
because they reverse orientation.) 
The axis of rotation is the intersection of those planes, the
angle between the planes is half the rotation angle. 

\subsection{Reflections in lines}\label{s3.1}
The rotation by $180^\circ$ of the 3-space about a line $l$ is called the 
{\sfit reflection in $l$\/}. The notation $R_l$ is extended to reflections
of this kind. 

\begin{Th}\label{lemCompReflInLines}
A composition $R_b\circ R_a$ of reflections in lines $a$ and $b$ 
is the rotation by the angle twice greater than the angle
between $a$ and $b$ about the axis $c$ which is perpendicular to $a$ and $b$. 
\end{Th}

\begin{proof}
Denote by $C$ the plane containing $a$ and $b$. Denote by $A$ the plane
containing $a$ and $c$ and by $B$ the plane containing $b$ and $c$. 
Since $C\perp A$ and $C\cap A=a$, we have $R_a=R_A\circ R_C$. Similarly,
$R_b=R_B\circ R_C$. Further,
$$
R_b\circ R_a=R_B\circ R_C\circ R_A\circ R_C.
$$
Since the reflections in orthogonal planes $C$ and $A$ commute, 
$$
R_B\circ R_C\circ R_A\circ R_C=R_B\circ R_A\circ R_C^2=R_B\circ R_A.
$$
 As the composition of reflections in planes $A$, $B$ with $A\cap B=c$, 
$R_B\circ R_A$ is a rotation about $c$ by the angle twice the angle
between $A$ and $B$. The angle between $A$ and $B$ equals the angle between
$a$ and $b$.
\end{proof}

\subsection{Pencil relations}\label{s3.2}
The representation of a reflection in a line provided by Theorem 
\ref{lemCompReflInLines} is non-unique. This non-uniqueness can be
considered as relations among quadruples of reflections in lines. 
Namely, if $a$,
$b$, $c$, $d$ are coplanar lines, 
$\measuredangle(a,b)=\measuredangle(c,d)$ and
$\measuredangle(a,c)=\measuredangle(b,d)$, then 
\begin{equation}\label{eq3}
R_b\circ R_a=R_d\circ R_c.
\end{equation}

As \eqref{eq3} is similar to \eqref{eq1} and the four lines involved in 
\eqref{eq3} belong a pencil of lines, \eqref{eq3} is also called a 
{\sfit pencil relation\/}.

\subsection{Polar frame relations}\label{s3.3}
\begin{Th}\label{corOrthoFr}
If $a$, $b$ and $c$ are pairwise orthogonal lines in $\R^3$ passing 
through the origin, then 
$R_b\circ R_a=R_c$, or, equivalently and more symmetrically, 
\begin{equation}\label{eq4}
R_c\circ R_b\circ R_a=\id.
\end{equation}
\end{Th}

\begin{proof} By Theorem  \ref{lemCompReflInLines}, $R_b\circ R_a$
is the rotation about $c$ by $180^\circ$, that is $R_b\circ R_a=R_c$. Composing
both sides of this equality with $R_c$ and taking into account that
$R_c^2=\id$, we obtain \eqref{eq4}.
\end{proof}

A relation \eqref{eq4} is called an {\sfit polar frame relation.}  

\subsection{Presentation of $SO(3)$}\label{s3.4}
\begin{Th}\label{thPresSO3} The group 
$SO(3)$ is generated by reflections in lines, any relation in $SO(3)$ 
among reflections in lines follows from pencil, involution and 
polar frame relations.
\end{Th}

\begin{lem}\label{lem3refl}
Any composition of three reflections in lines of $\R^3$ can be converted by
polar frame, pencil and involution relations into a composition of 
two reflections in lines. 
\end{lem}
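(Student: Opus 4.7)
My plan is to case-split on the geometric configuration of the three concurrent lines $a,b,c$ in the composition $R_c\circ R_b\circ R_a$. The involution relation immediately handles the situations in which two consecutive lines coincide, so I may assume $a\ne b$ and $b\ne c$. Two genuinely geometric cases then remain: the coplanar and the non-coplanar.

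\emph{Coplanar case.} When $a,b,c$ all lie in one plane $P$ through the origin, I would apply the pencil relation in $P$ to slide the pair $(a,b)$ so that its second line becomes $c$: that is, find the unique $a'\subset P$ with $R_b\circ R_a=R_c\circ R_{a'}$. The involution relation $R_c^2=\id$ then collapses the product to the single reflection $R_{a'}$, which a polar frame relation rewrites as a composition of two (take any orthonormal frame containing $a'$).

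\emph{Non-coplanar case.} Here my plan is pencil $\to$ polar frame $\to$ pencil. First, a pencil relation in the plane of $a,b$ replaces $R_b\circ R_a$ by $R_{b'}\circ R_{a'}$ with $a'\perp c$; such an $a'$ exists because the plane of $a,b$ either meets $c^\perp$ in a single line or coincides with $c^\perp$ (in which latter case any $a'$ will do). Let $d$ be the line through the origin perpendicular to both $a'$ and $c$. Then $(a',c,d)$ is an orthonormal triple, so the polar frame relation gives $R_c=R_d\circ R_{a'}$, and
\[
R_c\circ R_b\circ R_a \;=\; R_c\circ R_{b'}\circ R_{a'} \;=\; R_d\circ R_{a'}\circ R_{b'}\circ R_{a'}.
\]
Let $b''$ be the image of the line $b'$ under the $180^\circ$ rotation $R_{a'}$; since the plane of $a',b'$ is invariant under $R_{a'}$, the line $b''$ lies in this plane and $\measuredangle(a',b'')=\measuredangle(a',b')$. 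A second pencil relation, in the plane of $a',b'$, therefore yields $R_{b'}\circ R_{a'}=R_{a'}\circ R_{b''}$, and one further application of $R_{a'}^2=\id$ collapses the word to $R_d\circ R_{b''}$---a composition of two reflections in lines, as required.

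The main obstacle is the non-coplanar case. Because the pencil relation preserves angles, sliding the pair $(a,b)$ within its plane cannot by itself make the word interact with $c$ once $c$ leaves this plane; the polar frame relation is the new ingredient that bridges the gap, trading $R_c$ for a product one of whose lines already lies in the plane of $a',b'$. The subtlest point is the second pencil relation, which realises the conjugation $R_{a'}\circ R_{b'}\circ R_{a'}=R_{b''}$ and rests on the invariance of the plane of $a',b'$ under the $180^\circ$ rotation $R_{a'}$.
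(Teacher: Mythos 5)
Your proof is correct, and it relies on the same basic tool as the paper --- trade one reflection in a line for a product of two via a polar frame relation chosen compatibly with the remaining factors, then cancel with pencil and involution relations --- but the execution is genuinely different. The paper's argument is uniform, with no case split: for $R_m\circ R_l\circ R_k$ it takes the plane $P$ of the two \emph{left} lines $l,m$, chooses an orthogonal frame through $k$ with one of its other two lines lying in $P$, expands $R_k$ accordingly, and then a single pencil relation in $P$ (plus an involution) collapses the three coplanar reflections to one, leaving a two-letter word. You instead first normalize the right-hand pair by a pencil relation so that $a'\perp c$, expand the \emph{left} factor $R_c=R_d\circ R_{a'}$, and then need an extra ``conjugation'' pencil move $R_{b'}\circ R_{a'}=R_{a'}\circ R_{b''}$ before the involution cancels, treating the coplanar configuration separately. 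All your steps are legitimate instances of the stated relations: the line $a'$ exists because the plane of $a,b$ meets $c^\perp$ in at least a line, and the conjugation move is just the pair $(a',b')$ rotated within its plane by minus the directed angle from $a'$ to $b'$. So the argument is sound; it simply costs one more pencil application and a case analysis compared with the paper's adapted-frame choice, in exchange for making the cancellation completely explicit. One small loose end: when two consecutive lines coincide you reduce to a \emph{single} reflection, which is not literally a composition of two; either expand it by a polar frame relation, exactly as you do at the end of your coplanar case, or note that a shorter word is even better for the purposes of the proof of Theorem~\ref{thPresSO3}.
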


\begin{proof} Consider a composition $R_m\circ R_l\circ R_k$ of three
reflections. Let $P$ be a plane containing $l$ and $m$. Denote by $b$
the line orthogonal to $k$ and contained in $P$. (If $b$ is not orthogonal
to $P$, then 
$b=P\cap k^\perp$, where $k^\perp$ is the orthogonal complement to $k$; if
$P=k^\perp$, then $b$ can be any line in $P$.) Let $c$ be the line
orthogonal to $k$ and $b$. 

By a polar frame relation, $R_k=R_c\circ R_b$, and hence 
$R_m\circ R_l\circ R_k= R_m\circ R_l\circ R_c\circ R_b$. Now, lines $c$, $l$
and $m$ are coplanar, and by a pencil relation their composition 
$ R_m\circ R_l\circ R_c$ is a reflection in a line.
\end{proof}

\begin{proof}[\bfit Proof of Theorem \ref{thPresSO3}]
Any non-identity element of $SO(3)$ is a rotation
about a line, and, by Theorem \ref{lemCompReflInLines}, any rotation about a
line is a composition of two reflections in lines. Therefore $SO(3)$ is
generated by reflections in lines.

Take any relation $R_1\circ R_2\circ \dots\circ R_n=\id$ among reflections
in lines.
By Lemma \ref{lem3refl}, we may reduce its length $n$ by applying 
polar frame, pencil and involution relations to a number which is less than
three.  
The composition of two reflections in non-coinciding lines is a rotation by
the angle twice the angle between the lines. Thus, if the lines do not
coincide, then the composition is not the identity. If the lines coincide,
then the composition is the identity, but the relation is reduced to an 
involution relation.
A relation cannot consist of a single reflection in a line, because a
reflection in a line is not identity.
\end{proof}

\subsection{Orientation preserving isometries of the sphere}\label{s3.5}
Elements of $SO(3)$ are orientation preserving isometries of
$\R^3$. Their restrictions to the 2-sphere $S^2$ are orientation preserving 
isometries of $S^2$. Therefore the results of this section admit
reformulations for the group of orientation preserving isometires of the
2-sphere. In particular, Theorem  \ref{thPresSO3} means that this group is
generated by reflections in pairs of antipodal points and any relation
among such reflections follow from the corresponding versions of pencil,
involution and polar frame relations. 

This is in a sharp contrast to the situation in the group of orientation
preserving isometries of the Euclidean plane, in which the only involutions
are reflections in points, and they do not generate the group. The group
generated by all the involutions consists of the involutions themselves and
all the translations. 

\subsection{Isometries of the projective plane}\label{s3.6}
Consider the projective plane $\R P^2$ equipped with the metric defined by 
the Euclidean metric in $R^3$. 
The isometry group of the projective plane is $PO(3,\R)$. 
Each isometry $\R P^2\to\R P^2$ admits two liftings to the covering space 
$S^2$. The two liftings differ by the antipodal involution of $S^2$, the 
only non-trivial automorphism of the covering $S^2\to \R P^2$. The
antipodal involution of $S^2$ reverses orientation. Therefore one of the 
liftings of any isometry of $\R P^2$ is orientation reversing and the other
one is orientation reversing. 

The fixed point set of an isometry-involution of the projective plane
consists of a point and a line polar to each other. Its lifting reversing
orientation is a reflection in a great circle, while the lifting preserving
orientation is a reflection in a pair of antipodal points. The great circle
and the antipodal points cover the components of the original involution of
the projective plane. 

The construction of orientation preserving covering map defines an
isomorphism of the group of isometries of the projective plane to a
group of orientation preserving isometries of the 2-sphere. 

Therefore Theorem  \ref{thPresSO3} implies that the isometry group of the
projective plane is generated by involutions and any relation among the
involutions of the projective plane is a corollary of relations that
correspond to pencil, involution and polar frame relations.

\section{New graphical calculus for rotations}\label{s4}

\subsection{Arrow-arcs presenting a rotation}\label{s4.1}
Any orientation preserving isometry of $S^2$ is a rotation about a line 
$l\in\R^3$, or, if we want to speak solely in terms of $S^2$, 
a rotation about the set $l\cap S^2$ of two antipodal points. 
By Theorem \ref{lemCompReflInLines}, it is
a composition of reflections in lines $a$ and $b$ orthogonal to $l$. On
$S^2$, these reflections are represented by two-point sets 
$a\cap S^2$ and $b\cap S^2$. There is a natural ordering of the sets: if
the rotation is represented as 
$R_b\circ R_a$, we have to apply $R_a$, first, and $R_b$, second. Thus,
$a$ precedes $b$. 

A two-point set consisting of antipodal points can be recovered from 
any of its points. Let us pick up a point $A$ from $a$ and a point
$B$ from $b$ and form an ordered pair $(A,B)$. The pair $(A,B)$ encodes 
$R_l$. In order to make $(A,B)$ more visible,
let us connect them with an arc equipped with an arrow. The angular length
of the arc is half of the angle of the rotation.
%\vskip5pt
\begin{figure}
\centerline{\input{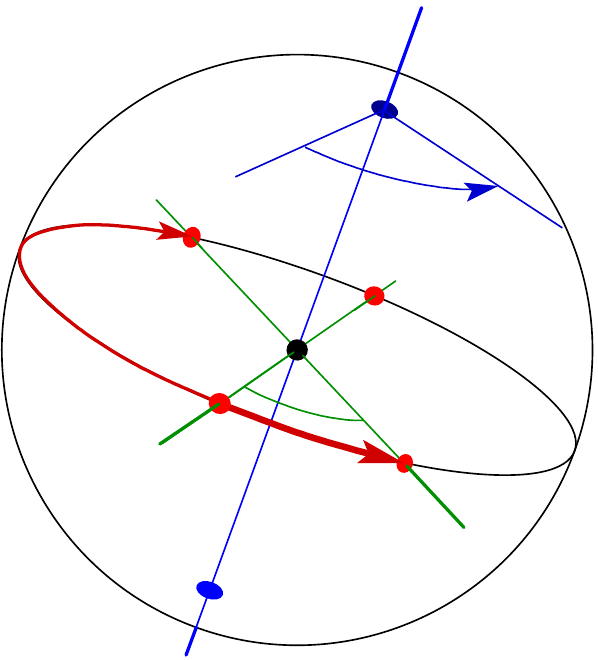_t}}
\caption{Arrow-arc ${AB}$ 
represents a rotation by $2\alpha$.}
\label{f5}
\end{figure}
%\vskip5pt

By pencil relations, an arrow-arc representing a rotation is defined by the
rotation up to gliding along its great circle and replacing the arrowhead
by the antipodal point.

This representation of rotation reminds the traditional representation of
a translation by an arrow. However, there are two important
differences. 

First, an arrow representing a translation connects a point with its
image under the translation, while in an arrow-arc representing a rotation 
the arrowhead is on the half a way to the image of the 
arrowtail.

Second, the image under any translation of an arrow representing a plane 
translation still represents the same translation, while an arrow-arcs
representing the same rotation are locked on the same great circle. 

Since a translation can be represented as a composition of two symmetries
with respect to points, a translation can be also represented by an arrow
connecting the centers of symmetries. This representation of translations
differs from the traditional one just by the length of the arrows: the
traditional arrows are twice longer. Despite this rescaling, it has all 
advantages of the traditional one.

\subsection{Triangle rule for arrow-arcs}\label{s3.6}
Usually a rotation of $\R^3$ is presented by so called  
{\sfit angular displacement\/} vector. 
This is an arrow directed along the axis of rotation,  its length is 
the angle of rotation and the direction is defined by some orientation
agreement (a right-hand rule). A well-known drawback of angular 
displacement vector 
is that it has a complicated behavior under composition of rotations: 
the angular dispalcement of a composition of rotations $U$ and $V$ 
is not the sum of the angular displacements of $U$ and $V$, although it is
defined by the angular displacements of $U$ and $V$. The relation between
the angular displacement of $V\circ U$  and the angular displacements of
$U$ and $V$ is too complicated to be useful.

The arrow-arc representation of $V\circ U$ can be easily calculated 
in terms of arrow-arcs for $U$ and $V$.

\begin{Th}\label{triangleRule}
Let rotations $U$ and $V$ of $S^2$ be represented by arrow-arcs $AB$ and
$CD$, respectively. The great circles containing the arcs intersect (as any
two great circles). By sliding the arrow-arcs along their great circles,
one can arrange them so that $B=C$. Assume this has been done.
Then the rotation $V\circ U$ is represented by
the arrow-arc $AD$.
\end{Th}

\begin{proof} Denote by $a$, $b$, $c$ and $d$ the lines connecting the
origin (i.e., the center of $S^2$) with the points $A$, $B$, $C$ and $D$.
Then $U=R_b\circ R_a$ and $V=R_d\circ R_c$, and $V\circ U=R_d\circ
R_c\circ R_b\circ R_a.$ Since $B=C$ and $b=c$, $R_c=R_b$, and 
 $V\circ U=R_d\circ R_c^2\circ R_a=R_d\circ R_a$.
\end{proof}

\begin{figure}[htb]
\centerline{\input{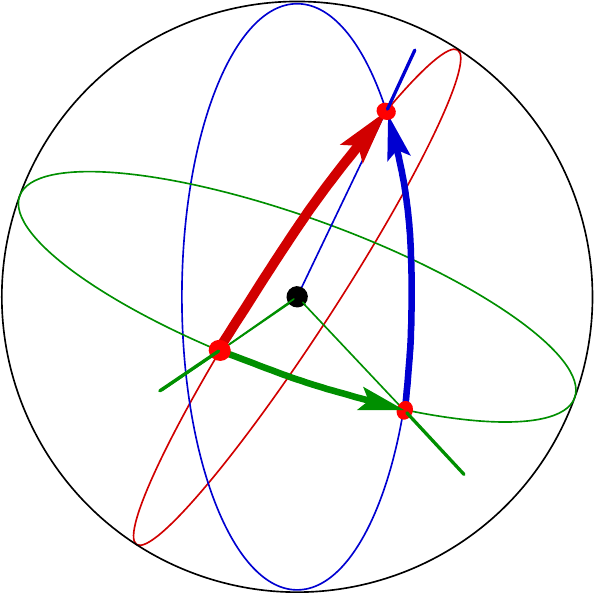_t}}
\caption{The sum of arrow-arcs %\newline 
represents the composition of rotations represented by the summands.}
\label{f6}
\end{figure}

\section{Orthogonal groups $O(n)$}\label{s6}

\subsection{Well-known facts}\label{s6.1}
Any orthogonal linear transformation $T:\R^n\to\R^n$ splits into orthogonal
direct sum of orthogonal transformations of 1- and/or 2-dimensional spaces.
See, e.g., \cite{Berger}, 8.2.15. 
On a 1-dimensional subspace an orthogonal transformation is either identity
or symmetry in the origin. Therefore $T$ splits into
orthogonal direct sum of a reflection in a vector subspace and rotations of
2-subspaces. 

This orthogonal sum splitting of $T$ can be turned into a 
splitting of $T$ into 
a composition. For this,  extend the transformation on each summand 
to a transformation of the whole space by the identity on the orthogonal 
complement to the summand. 
The extended transformations of the whole space commute with each
other and their composition equals the original transformation.

Since a rotation of plane is a composition of two reflections in lines, it
follows that any element of $O(n)$ can be represented as a composition of
reflections in hyperplanes. Any element of $O(n)$ can be represented as a
composition of at most $n$ reflections. See, e.g., \cite{Berger}, 8.2.12.

\subsection{Pencil relations among reflections}\label{s6.2}
Non-uniqueness for representation as a composition of two reflections 
for a plane rotation implies non-uniqueness for representation of a
rotation of $\R^n$ about a subspace of codimension 2 as a composition of 
two reflections in hyperplanes. 

As in Section \ref{s1.2}, this non-uniqueness can be formulated as 
relations among reflections in hyperplanes. We will call these relations
also {\sfit pencil relations.\/}

\begin{Th}\label{mainThO(n)}
In $O(n)$ any relation among reflections in hyperplanes 
follows from pencil and involution relations. 
\end{Th}

A proof of Theorem \ref{mainThO(n)} is similar to the proof of Theorem
\ref{mainThE(2)} above and is based on the following lemma:

\begin{lem}\label{lemn+1refl}
In $O(n)$, any composition of $n+1$ reflections in hyperplanes 
can be converted by pencil and involution relations
 into a composition of $n-1$ reflections in hyperplanes. \qed  
\end{lem}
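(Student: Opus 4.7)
The plan is to induct on $n$, following the pattern of Lemmas \ref{lem4refl} and \ref{lem4reflO(3)}. For the base case $n=2$, any three reflections $R_{l_1} R_{l_2} R_{l_3}$ in $O(2)$ involve lines in the single pencil of lines through the origin, so one pencil relation rewrites $R_{l_1} R_{l_2}$ as $R_{l_1'} R_{l_3}$ (choosing the new second factor to coincide with $l_3$), and an involution then collapses the adjacent $R_{l_3}$'s, leaving a single reflection.

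For the inductive step I would assume the lemma for $O(n-1)$ and consider $g = R_{H_1} \circ \cdots \circ R_{H_{n+1}}$ in $O(n)$. First I would show that $g$ necessarily fixes some non-zero vector $w$. Writing $n = m + k + 2c$, with $m$, $k$, and $2c$ the multiplicities of the eigenvalues $+1$, $-1$, and of complex conjugate pairs of $g$, we have $\det g = (-1)^k = (-1)^{n+1}$, so $k \equiv n+1 \pmod 2$; if $m = 0$, then $k + 2c = n$ forces $k \equiv n \pmod 2$, a contradiction. Hence $m \ge 1$ and a non-zero fixed vector $w$ exists.

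The key step is then to use pencil relations to rewrite the composition so that every hyperplane in the new expression contains $w$. Once this has been done, all reflections lie in the subgroup $O(w^\perp) \cong O(n-1)$; the pencil and involution relations in $O(n)$ that involve only such hyperplanes are precisely the pencil and involution relations in $O(n-1)$, and one application of the inductive hypothesis to the first $n$ factors cuts the length from $n+1$ down to $n-1$.

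The main obstacle is this rewriting step. I plan to attack it by classifying each hyperplane as \emph{good} (containing $w$) or \emph{bad} (not), and analyzing pencil moves as follows. For an adjacent pair of bad hyperplanes $(H_i, H_{i+1})$ one has $w \notin L := H_i \cap H_{i+1}$, so the pencil through $L$ contains a unique member $L + \R w$ passing through $w$; the pencil relation can then replace the pair by (good, bad), reducing the bad count by one. For a (good, bad) adjacent pair a pencil move can be chosen to swap the roles, shifting a bad factor to the adjacent slot, so two separated bad factors can always be brought together and then annihilated by the previous move. The technical heart of the argument is to verify that this shifting-and-cancelling procedure actually terminates at zero bad factors; this should rest on the global constraint that $g$ fixes $w$, which rules out pathological configurations (such as a lone bad factor flanked by goods in a composition that still fixes $w$) that would otherwise block the reduction.
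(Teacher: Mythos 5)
The paper offers no argument for this lemma at all --- it is stated without proof, with only the remark that Theorem \ref{mainThO(n)} follows as in the Euclidean case --- so there is no paper route to match; the closest model, Lemma \ref{lem4refl}, works by sliding mirrors inside pencils until two adjacent mirrors coincide. Your induction on dimension through a common fixed vector is a different and perfectly viable strategy, and most of it is already solid: the eigenvalue count showing that a product $g$ of $n+1$ hyperplane reflections fixes some $w\ne 0$ is correct; relations among hyperplanes containing $w$ do restrict to (and lift from) pencil and involution relations in $O(w^\perp)\cong O(n-1)$; and both of your moves are honest pencil relations. For the shift, $R_G\circ R_B=R_{R_G(B)}\circ R_G$ (and its mirror image), where all four mirrors contain $G\cap B$ and $R_G(B)$ again misses $w$; for an adjacent bad pair with $B_1\ne B_2$, the unique hyperplane $(B_1\cap B_2)+\R w$ of the pencil through $B_1\cap B_2$ can be installed as one of the two new mirrors (and if $B_1=B_2$, the involution relation deletes both and you are already at length $n-1$).

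The only genuine gap is the termination claim you leave as a hope, but it closes in two lines, exactly by the mechanism you name. If a word has exactly one mirror $B$ not containing $w$, write it as $h_1\circ R_B\circ h_2$ with $h_1,h_2$ products of reflections in hyperplanes through $w$; choosing a unit normal $u$ of $B$, the condition $w\notin B$ gives $\langle w,u\rangle\ne 0$, hence $R_B(w)\ne w$, and since $h_2(w)=w$ while $h_1$ is injective with $h_1(w)=w$, the product moves $w$. Every intermediate word still has product $g$ and $g(w)=w$, so the number of bad mirrors is never exactly $1$; in particular the bad--bad move performed when only two bad mirrors remain must output two hyperplanes through $w$. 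Each bad--bad move lowers the bad count by one or two, shifting leaves it unchanged, and the count can never take the value $1$, so the process necessarily ends with every mirror through $w$; your appeal to the inductive hypothesis for the first $n$ factors then finishes the proof. With that verification written out, your argument is complete.
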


%\clearpage

\end{document}